\newcommand{\PMP}{Pontryagin's Maximum Principle}
\newcommand{\R}[1]{\mathbb{R}^{#1}}
\newcommand{\N}[1]{\mathbb{N}^{#1}}
\newcommand{\innerprod}[2]{\ensuremath{\left\langle {#1} , {#2} \right\rangle}}
\newcommand{\lieprod}[2]{\ensuremath{\left[ {#1} , {#2} \right]}}
\newcommand{\ad}[3]{\ensuremath{\operatorname{ad}_{#1}^{#2}{#3}}}
\newcommand{\sign}[1]{\ensuremath{\operatorname{sign} \left( {#1} \right)}}
\newcommand{\mes}[1]{\ensuremath{\operatorname{Mes} \left( {#1} \right)}}
\newtheorem{teo}{Theorem}[section]
\newtheorem{lem}[teo]{Lemma}
\newtheorem{prop}[teo]{Proposition}
\newtheorem{cor}[teo]{Corolary}
\begin{document}
\title{The order of optimal control problems}
\author{Eduardo Oda}
\address{IMEUSP - Instituto de Matem\'{a}tica e Estat\'{\i}stica da Universidade de S\~{a}o Paulo}
\email{oda@ime.usp.br}
\author{Pedro Aladar Tonelli}
\address{IMEUSP - Instituto de Matem\'{a}tica e Estat\'{\i}stica da Universidade de S\~{a}o Paulo}
\email{tonelli@ime.usp.br}

\begin{abstract}
  The \PMP{} allows, in most cases, the design of optimal controls of affine nonlinear control systems by considering the sign of a smooth function. There are cases, although, where this function vanishes on a whole time interval and the \PMP{} alone does not give enough information to design the control. In these cases one considers the time derivatives of this function until a $k$-order derivative that explicitly depends on the control variable. The number $q=k/2$ is called \emph{problem order} and it is the same to all the extremals. The \emph{local order} is a related concept used in literature, but depending on each particular extremal. The confusion between these two concepts led to misunderstandings in past works (see \cite{LEWIS:1980}), where the \emph{problem order} was assumed to be an integer number. In this work we prove that this is true if the control system has a single input but, in general, it is not true if the control system has a multiple input.
\end{abstract}

\maketitle

\section{Introduction}
  Consider the following optimal affine control problem (P):
  \begin{align*}
    \mbox{minimize } & \int_{0}^{T_f(u)} f_0(x)+\sum_{i=0}^{m}g_{0i}(x)u_i \; dt \\
    \mbox{subject to } & \begin{cases}
        \dot{x} = f(x)+\sum_{i=0}^{m}g_i(x)u_i \\
        u=(u_1,\dots,u_m):[0,T_f(u)]\rightarrow\R{m} \mbox{ such that }\\
        |u_i(t)|\leq K(t),\; \forall t\in [0,T_f(u)]&, i=1, \dots, m \\
        x(0) = A \\
        x(T_f(u)) = B
      \end{cases}
  \end{align*}
  where:
  \begin{enumerate}
    \item $x=(x_1,\dots,x_n)\in\R{n}$ and $u=(u_1,\dots,u_m)\in\R{m}$
    \item $f$, $g_i$, $i=1, \dots, m$, are analytic vector fields in $\R{n}$
    \item $f_0$, $g_{0i}$, $i=1, \dots, m$, are analytic maps from $\R{n}$ to $\R{}$
    \item $K$ is analytic and strictly positive
    \item $u_i \in \mes{\R{}}$, $i=1, \dots, m$
  \end{enumerate}
  
  From the \PMP{}, if one defines the Hamiltonian function:
  \begin{align*}
    H_\lambda:T^*\R{n} \times \R{m} &\longrightarrow \R{} \\
    (x,p,u) &\longmapsto \innerprod{p}{f(x)+\sum_{i=0}^{m}g_i(x)u_i} - \lambda \left( f_0(x)+\sum_{i=0}^{m}g_{0i}(x)u_i \right)
  \end{align*}
  where $(x,p) \in T_x\R{n}$ and $\lambda \in \{0,1\}$, then each optimal trajectory $(\bar{x},\bar{u}):[0,\bar{T}] \rightarrow \R{n} \times \R{m}$ has a lift to the cotangent space such that $H_\lambda(\bar{x}(\bar{T}),\bar{p}(\bar{T}),\bar{u}(\bar{T}))=0$ and
  \begin{equation*}
    \mbox{(Adj) }
      \begin{cases}
      \frac{d\bar{x}}{dt}(t)=\frac{\partial H_\lambda}{\partial p}(\bar{x}(t),\bar{p}(t),\bar{u}(t)) \\
      \frac{d\bar{p}}{dt}(t)=-\frac{\partial H_\lambda}{\partial x}(\bar{x}(t),\bar{p}(t),\bar{u}(t)) \\
      H_\lambda(\bar{x}(t),\bar{p}(t),\bar{u}(t))=\sup\left\{ H_\lambda(\bar{x}(t),\bar{p}(t),v) | v \in U \right\}
      \end{cases}       
  \end{equation*} 
  for almost all $t \in [0,\bar{T}]$ and $\lambda \in \{0,1\}$. Also $(\lambda,\bar{p}(t))\neq 0$, for almost all $t\in [0,\bar{T}]$. The solutions of (Adj) are called extremals and might not be an optimal solution of the original problem. The new variable $p$ is known as the \emph{adjoint variable}.

  Setting
  \begin{align*}
    \bar{f}&=(f_0,f) & \bar{g}_i&=(g_{0i},g_i) \\
    \bar{x}&=(x_0,x) & \bar{p}&=(\lambda,p)
  \end{align*}
  where $x_0$ satisfies:
  \begin{equation*}
    \dot{x}_0=f_0(x)+\sum_{i=0}^{m}g_{0i}(x)u_i,
  \end{equation*}
  the Hamiltonian function become $H_\lambda=\innerprod{\bar{p}}{\bar{f}} + \sum_{i=i}^{m} \innerprod{\bar{p}}{\bar{g}_i}u_i$ and one has:
  \begin{equation} \label{e:adj_simples}
    \begin{aligned}
      \dot{\bar{x}}&=\frac{\partial H_\lambda}{\partial \bar{p}} = \bar{f}+\sum_{i=0}^{m}\bar{g}_i(x)u_i\\
      \dot{\bar{p}}&=-\frac{\partial H_\lambda}{\partial \bar{x}} = -\bar{p}\frac{\partial f}{\partial x} - \sum_{i=1}^{m} \bar{p}\frac{\partial g_i}{\partial x}u_i.
    \end{aligned}
  \end{equation}
  
  Henceforth, the simplified notation $f, g_i, x$ and $p$ stands for $\bar{f}, \bar{g}_i, \bar{x}$, respectively. Note that the dimension of the new problem is $n+1$.
  
  Since the Hamiltonian $H_\lambda$ is linear in $u$, one has by the \PMP{} that $u_i(t)=\sign{\innerprod{p(t)}{g_i(x(t))}} K(t)$ on the intervals where $\innerprod{p(t)}{g_i(x(t))}$ is non zero almost everywhere. These are known as the \emph{nonsingular intervals}. In this case the control is said nonsingular on these intervals.
    
  Analogously, the intervals where $\innerprod{p(t)}{g_i(x(t))}$ vanishes almost everywhere are known as \emph{singular intervals} and the controls are called singular on these intervals. Along a singular interval the control can not be designed by the \PMP{}, but one could consider the time derivatives of $\innerprod{p(t)}{g_i(x(t))}$.

  Indeed, in a singular interval, the time derivatives of the column vector $\left[\innerprod{p}{g_i}\right]_{i=1,\dots,m}$ could be evaluated until a relation that depends on $u$ explicitly is obtained. In other words, for $l \in \N{}$,  one has the $m \times m$ matrix:
  \begin{equation*}
    B_l = \frac{\partial}{\partial u} \left( \frac{d^l}{dt^l} \left[\innerprod{p}{g_i}\right]_{i=1,\dots,m} \right).
  \end{equation*}
  
  Consider the first one that is not identically zero, say the $k$-th derivative $B_k$. Then one has:
  \begin{equation*}
    0 = \frac{d^k}{dt^k} \left[\innerprod{p}{g_i}\right]_{i=1,\dots,m} = A_k(x,p) + B_k(x,p)u.
  \end{equation*}
  
  If $B_{k}$ is nonsingular, all the controls can be evaluated. Otherwise, it is necessary to find some controls, reducing the problem and restarting the procedure. It's now clear the central role of the functions $\innerprod{p(t)}{g_i(x(t))}$ and its derivatives on the design of optimal controls.
  
  The number $q=k/2$ is called \emph{problem order} or \emph{intrinsic order}. Note that even if $B_k$ is not identically zero it can became singular, or even identically zero, along an specific extremal. This fact lead us to another concept of order, know as \emph{arc order} or \emph{local order}. There were a lot of confusion around these concepts of order, first noticed by Lewis in 1980 \cite{LEWIS:1980}. There is one issue, although, that was not mentioned by Lewis, which will be treated in this paper.
  
  When Robbins, in 1967 \cite{ROBBINS:1967}, has enunciated and proved the Generalized Legendre-Clebsch (GLC) Condition, he was not aware of these order concepts, so he did not make clear which order he was considering. For that reason, some authors, including Lewis, have thought that the GLC Condition was valid for the problem order. Unfortunately it's not true, since the problem order can be a fraction, i.e., $k$ can be odd. Consider, for instance, the following optimal control problem:
  \begin{align*}
      \dot{x} &= v_1 \cos \theta + v_2 \sin \theta &
      \dot{y} &= v_2 \cos \theta - v_1 \sin \theta &
      \dot{\theta} &= \Omega \\
      \dot{v}_1 &= u_1 &
      \dot{v}_2 &= u_2 &
      \dot{\Omega} &= u_3.
  \end{align*}
  
  No matter what functional one wants to optimize, the intrinsic order of this problem is always $\tfrac{3}{2}$. However, if one wants to minimize $\int_O^T x^2+y^2+\theta^2 dt$, it's easy to see that, been at the origin, stay at the origin with a fully singular trajectory is the optimal solution. Fortunately, in this case the matrix $B_k$ is identically zero, so this example does not contradict the GLC Condition, if one considers it along the trajectory, in other words, if we consider the GLC Condition with the local order.
  
  The problem order fail to be an integer mainly because there are several inputs in this example. In the case one has just one input, the problem order is always a positive integer, as will be proved soon.
  
\section{Evaluating the problem order}
  To evaluate the problem order a simple known lemma is necessary.
  
  \begin{lem} \label{l:derivada_de_<p,h>}
    Let $h$ be a smooth vector field. Then along an extremal we have:
    \begin{equation*}
      \frac{d}{dt}\innerprod{p}{h}=\innerprod{p}{\lieprod{f}{h}+\sum_{i=1}^{m}u_i\lieprod{g_i}{h}}.
    \end{equation*}
  \end{lem}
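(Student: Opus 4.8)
The plan is to compute the time derivative of the scalar function $\innerprod{p}{h}$ directly from the extremal equations \eqref{e:adj_simples}, treating $\innerprod{p}{h}$ as a function of $t$ through the dependence of both $p(t)$ and $x(t)$ on time along the extremal. By the product rule for the pairing, I would write $\frac{d}{dt}\innerprod{p}{h} = \innerprod{\dot{p}}{h} + \innerprod{p}{\frac{d}{dt}h(x)}$, so the entire computation reduces to substituting the two dynamical equations and recognizing the resulting expression as a pairing of $p$ against a Lie bracket.

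First I would handle the second term. Since $h$ is evaluated along the trajectory, $\frac{d}{dt}h(x(t)) = \frac{\partial h}{\partial x}\dot{x}$, and substituting $\dot{x} = f + \sum_{i=1}^{m} g_i u_i$ from \eqref{e:adj_simples} gives $\innerprod{p}{\frac{\partial h}{\partial x}\left(f + \sum_i g_i u_i\right)}$. Next I would handle the first term using the adjoint equation $\dot{p} = -p\frac{\partial f}{\partial x} - \sum_i p\frac{\partial g_i}{\partial x}u_i$, which yields $-\innerprod{p}{\frac{\partial f}{\partial x}h} - \sum_i u_i \innerprod{p}{\frac{\partial g_i}{\partial x}h}$ after pairing against $h$ and transposing. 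Combining the two and grouping by the drift field and each control field, the drift contribution is $\innerprod{p}{\frac{\partial h}{\partial x}f - \frac{\partial f}{\partial x}h}$ and each control contribution is $u_i \innerprod{p}{\frac{\partial h}{\partial x}g_i - \frac{\partial g_i}{\partial x}h}$.

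The final step is to recognize each of these parenthesized expressions as a Lie bracket. Using the coordinate formula $\lieprod{X}{Y} = \frac{\partial Y}{\partial x}X - \frac{\partial X}{\partial x}Y$, the drift term is exactly $\innerprod{p}{\lieprod{f}{h}}$ and each control term is $u_i\innerprod{p}{\lieprod{g_i}{h}}$, so collecting everything gives the claimed identity. I do not anticipate a genuine obstacle here, since the result is a routine computation; the only point requiring minor care is the bookkeeping of transposes when moving the Jacobian matrices across the inner product (the adjoint equation is naturally written with $p$ acting on the left as a row vector), and the sign convention in the Lie bracket formula, which must be chosen consistently so that the two Jacobian-times-vector terms assemble into brackets with the correct orientation.
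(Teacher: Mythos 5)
Your proposal is correct and follows essentially the same route as the paper: differentiate $\innerprod{p}{h}$ by the product rule, substitute the state and adjoint equations from \eqref{e:adj_simples}, and assemble the Jacobian terms into the coordinate expression of the Lie brackets $\lieprod{f}{h}$ and $\lieprod{g_i}{h}$. The only difference is that you make the final bracket-recognition step explicit, which the paper leaves as ``from which follows the result.''
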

  \begin{proof}
    We know that
    \begin{equation*}
      \frac{d}{dt}\innerprod{p}{h}=\innerprod{\dot{p}}{h} + \innerprod{p}{\frac{\partial h}{\partial x}\dot{x}}.
    \end{equation*}

    From the equations \eqref{e:adj_simples} and the inner product linearity, the first portion of the sum above is:
    \begin{equation*}
      \innerprod{\dot{p}}{h}= \innerprod{p}{-\frac{\partial f}{\partial x}h} + \sum_{i=1}^{m}\innerprod{p}{-\frac{\partial g_i}{\partial x}}u_i
    \end{equation*}
    and the second one is:
    \begin{equation*}
      \innerprod{p}{\frac{\partial h}{\partial x}\dot{x}}=\innerprod{p}{\frac{\partial h}{\partial x}f}+\sum_{i=1}^{m}\innerprod{p}{\frac{\partial h}{\partial x}g_i}u_i
    \end{equation*}
    from which follows the result.
  \end{proof}

  One uses this lemma to evaluate the derivatives of $\phi = \left[\innerprod{p}{g_i}\right]_{i=1,\dots,m}$:
    
  \begin{equation}
    \phi^{(1)} =
      \left[\begin{array}{c}
        \frac{d}{dt}\innerprod{p}{g_1} \\
        \vdots \\
        \frac{d}{dt}\innerprod{p}{g_m} 
      \end{array}\right]
      =
      \left[\begin{array}{c}
        \innerprod{p}{ \ad{f}{}{g_1} + \sum_{i=1}^{m} \lieprod{g_i}{g_1}u_i } \\
        \vdots \\
        \innerprod{p}{ \ad{f}{}{g_m} + \sum_{i=1}^{m} \lieprod{g_i}{g_m}u_i }
      \end{array}\right].
  \end{equation}
    
  If the first derivative do not depend on $u$, in other words, if $\lieprod{g_i}{g_j}=0$, $\forall i,j$, then the second derivative is evaluated:
  \begin{equation}
    \phi^{(2)} =
      \left[\begin{array}{c}
        \frac{d}{dt}\innerprod{p}{ \ad{f}{}{g_1} } \\
        \vdots \\
        \frac{d}{dt}\innerprod{p}{ \ad{f}{}{g_m} }
      \end{array}\right]
      =
      \left[\begin{array}{c}
        \innerprod{p}{ \ad{f}{2}{g_1} + \sum_{i=1}^{m} \lieprod{g_i}{\ad{f}{}{g_1}}u_i } \\
        \vdots \\
        \innerprod{p}{ \ad{f}{2}{g_m} + \sum_{i=1}^{m} \lieprod{g_i}{\ad{f}{}{g_m}}u_i }
      \end{array}\right].
  \end{equation}
    
  Proceeding in this way until one finds an expression which explicitly depends on $u$ one gets:
  \begin{equation}
    \phi^{(k)} =
      \left[\begin{array}{c}
        \innerprod{p}{ \ad{f}{k}{g_1} + \sum_{i=1}^{m} \lieprod{g_i}{\ad{f}{k-1}{g_1}}u_i } \\
        \vdots \\
        \innerprod{p}{ \ad{f}{k}{g_m} + \sum_{i=1}^{m} \lieprod{g_i}{\ad{f}{k-1}{g_m}}u_i }
      \end{array}\right].
  \end{equation}
  
  It is possible to rewrite this expression in the form $\phi^{(k)} = A_k + B_ku$ where:
  \begin{equation*}
    A_k=\left[\begin{array}{c}
      \innerprod{p}{\ad{f}{k}{g_1}} \\
      \innerprod{p}{\ad{f}{k}{g_2}} \\
      \vdots \\
      \innerprod{p}{\ad{f}{k}{g_m}}
    \end{array}\right]
  \end{equation*}
  \begin{equation*}
    B_k=\left[\begin{array}{cccc}
      \innerprod{p}{\lieprod{g_1}{\ad{f}{k-1}{g_1}}} & \innerprod{p}{\lieprod{g_2}{\ad{f}{k-1}{g_1}}} & \cdots & \innerprod{p}{\lieprod{g_m}{\ad{f}{k-1}{g_1}}} \\
      \innerprod{p}{\lieprod{g_1}{\ad{f}{k-1}{g_2}}} & \innerprod{p}{\lieprod{g_2}{\ad{f}{k-1}{g_2}}} & \cdots & \innerprod{p}{\lieprod{g_m}{\ad{f}{k-1}{g_2}}} \\
      \vdots & \vdots & \ddots & \vdots \\
      \innerprod{p}{\lieprod{g_1}{\ad{f}{k-1}{g_m}}} & \innerprod{p}{\lieprod{g_2}{\ad{f}{k-1}{g_m}}} & \cdots & \innerprod{p}{\lieprod{g_m}{\ad{f}{k-1}{g_m}}}
    \end{array}\right].
  \end{equation*}

\section{Main result}
  In this section one considers nonlinear affine control systems with single input, $u\in\R{}$, and it is proved that the intrinsic order of optimal control problems of these systems is always a positive integer.
  
  \begin{teo} \label{p:q inteiro}
    Given an optimal control problem in the form (P) with single input, i.e., $m=1$, then its intrinsic order is a positive integer.
  \end{teo}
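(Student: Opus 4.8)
The plan is to reduce the claim to a purely Lie-algebraic identity and then exploit the antisymmetry of the bracket. Since $m=1$, write $g=g_1$ and abbreviate $h_j=\ad{f}{j}{g}$. Specializing the matrix $B_k$ of Section~2 to the scalar case gives
\[
  B_l=\innerprod{p}{\lieprod{g}{\ad{f}{l-1}{g}}}=\innerprod{p}{\lieprod{h_0}{h_{l-1}}},
\]
and in particular $B_1=\innerprod{p}{\lieprod{g}{g}}=0$ because $\lieprod{g}{g}=0$, so the order is never $1/2$. As $p$ ranges over the cotangent fibre, the function $B_l$ vanishes identically if and only if the vector field $\lieprod{h_0}{h_{l-1}}$ vanishes identically. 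Hence the intrinsic order is $q=k/2$ with $k$ the least index for which $\lieprod{h_0}{h_{k-1}}\not\equiv 0$, and it suffices to show $k$ cannot be odd.

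Arguing by contradiction, I would suppose the first nonvanishing bracket sits at an odd index $k=2s+1$, so that $\lieprod{h_0}{h_j}\equiv 0$ for $j=0,1,\dots,2s-1$. The first step is to propagate this to all low-degree brackets: I claim $\lieprod{h_a}{h_b}\equiv 0$ whenever $a+b\le 2s-1$. This I would prove by induction on $a$, using that $\ad{f}{}{}$ is a derivation, which yields
\[
  \lieprod{h_a}{h_b} = \ad{f}{}{\lieprod{h_{a-1}}{h_b}} - \lieprod{h_{a-1}}{h_{b+1}}.
\]
On the right, the first term vanishes because $\lieprod{h_{a-1}}{h_b}\equiv 0$ by the inductive hypothesis (index-sum $\le 2s-2$, first index $a-1$), and the second vanishes likewise (index-sum $a+b\le 2s-1$, first index $a-1$); the base case $a=0$ is exactly the standing hypothesis.

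The decisive step applies the same derivation identity one degree higher. Setting $\gamma_a=\lieprod{h_a}{h_{2s-a}}$ for $0\le a\le 2s$, I would compute $\ad{f}{}{\lieprod{h_a}{h_b}}$ for every split $a+b=2s-1$: the left-hand side is zero by the previous paragraph, while the derivation property gives $\gamma_{a+1}+\gamma_a=0$. Chaining these relations yields $\gamma_a=(-1)^a\gamma_0$. Evaluating at the midpoint $a=s$ and invoking antisymmetry, $\gamma_s=\lieprod{h_s}{h_s}=0$, so $(-1)^s\gamma_0=0$ and therefore $\gamma_0=\lieprod{g}{\ad{f}{2s}{g}}\equiv 0$. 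This forces $B_{2s+1}\equiv 0$, contradicting the minimality of $k=2s+1$. Thus $k$ is even and $q=k/2$ is a positive integer.

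The computation is routine once the right quantities are isolated; the conceptual heart is the observation that antisymmetry annihilates the middle term $\gamma_s$, which together with the alternating-sign recursion collapses the entire degree-$2s$ layer. I expect the main obstacle to be purely one of bookkeeping: arranging the double induction so that the derivation identity is only ever applied to a bracket whose vanishing has already been established, and keeping track of index-sums so that no bracket of degree $2s$ is used before the degree-$(2s-1)$ layer is known to vanish.
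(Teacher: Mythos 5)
Your proposal is correct and takes essentially the same route as the paper: your inductive layer-vanishing claim $\lieprod{h_a}{h_b}\equiv 0$ for $a+b\le 2s-1$ is the content of Proposition \ref{l:passa j} (obtained by the same Jacobi-identity/derivation step), and your alternating chain $\gamma_{a+1}=-\gamma_a$ collapsed by antisymmetry at the midpoint $a=s$ is exactly Corollary \ref{c:k_par_[g,ad_f^kg]=0}, which rules out an odd first index. The only differences are presentational: you frame it as a contradiction where the paper argues directly, and you add the (welcome) explicit observation that $B_1\equiv 0$, which secures positivity of the order.
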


  To simplify the proof of this theorem, some useful results are proved first.
  
  \begin{prop} \label{l:passa j}
    If there is $k\in \N{}$ such that $\lieprod{g}{\ad{f}{i}{g}}=0$ for all $x$, where $0 \leq i \leq k-1$ then for all $j \in \N{}$, $1 \leq j \leq k$, one has:
    \begin{equation*}
      \lieprod{\ad{f}{j}{g}}{\ad{f}{l}{g}}=-\lieprod{\ad{f}{j-1}{g}}{\ad{f}{l+1}{g}}
    \end{equation*}
    for all $x$, with $0\leq l \leq k-(j+1)$.
  \end{prop}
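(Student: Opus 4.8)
The plan is to prove the identity by induction on $j$, using the Jacobi identity as the engine and invoking the hypothesis only to annihilate a single leftover bracket. The point I would exploit first is that $\lieprod{f}{\cdot}$ is a derivation of the Lie bracket. Applying this to $\lieprod{\ad{f}{j-1}{g}}{\ad{f}{l}{g}}$ and using $\ad{f}{j}{g}=\lieprod{f}{\ad{f}{j-1}{g}}$ and $\ad{f}{l+1}{g}=\lieprod{f}{\ad{f}{l}{g}}$, one gets for every $j\geq 1$ and $l\geq 0$
\[
  \lieprod{\ad{f}{j}{g}}{\ad{f}{l}{g}} + \lieprod{\ad{f}{j-1}{g}}{\ad{f}{l+1}{g}} = \lieprod{f}{\lieprod{\ad{f}{j-1}{g}}{\ad{f}{l}{g}}}.
\]
Hence the claimed relation $\lieprod{\ad{f}{j}{g}}{\ad{f}{l}{g}}=-\lieprod{\ad{f}{j-1}{g}}{\ad{f}{l+1}{g}}$ is \emph{equivalent} to showing that the right-hand side vanishes, and for that it suffices to show that the single bracket $\lieprod{\ad{f}{j-1}{g}}{\ad{f}{l}{g}}$ is itself the zero vector field.

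For the base case $j=1$ the leftover bracket is exactly $\lieprod{g}{\ad{f}{l}{g}}$, which vanishes by hypothesis whenever $l\leq k-1$; since the admissible range for $j=1$ is $0\leq l\leq k-2$, this is satisfied and the case is done. For the inductive step I would assume the identity established for all indices strictly smaller than $j$ and telescope the leftover bracket: each already-proved instance lets me lower the first index by one and raise the second by one at the cost of a sign, so that $\lieprod{\ad{f}{j-1}{g}}{\ad{f}{l}{g}} = (-1)^{j-1}\lieprod{g}{\ad{f}{l+j-1}{g}}$, the chain terminating at $\ad{f}{0}{g}=g$. Because $l\leq k-(j+1)$ forces $l+j-1\leq k-2\leq k-1$, the hypothesis makes this final bracket zero, so the right-hand side of the displayed equation vanishes and the induction closes.

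The step I would check most carefully — and the only genuinely delicate one — is the index bookkeeping in the telescoping. Each shift $\lieprod{\ad{f}{m}{g}}{\ad{f}{l'}{g}}=-\lieprod{\ad{f}{m-1}{g}}{\ad{f}{l'+1}{g}}$ is a legitimate use of the induction hypothesis only if $l'$ lies in the range $0\leq l'\leq k-(m+1)$ allowed at level $m$. Running through the chain with $l'=l+(j-1)-m$ as $m$ decreases from $j-1$ to $1$, the upper-bound constraints all collapse to the single inequality $l\leq k-j$, which holds because $l\leq k-(j+1)$. This confirms that every shift invoked is one of the previously proved instances, so no circularity arises and the argument is complete.
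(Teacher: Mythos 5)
Your proof is correct and takes essentially the same route as the paper's: strong induction on $j$, the derivation property of $\lieprod{f}{\cdot}$ (Jacobi identity) to reduce the claimed identity to the vanishing of $\lieprod{\ad{f}{j-1}{g}}{\ad{f}{l}{g}}$, and your downward telescoping of that bracket to $(-1)^{j-1}\lieprod{g}{\ad{f}{l+j-1}{g}}$ is precisely the paper's inner induction on $m$ read in the opposite direction. Your index check is sound, and it even shows the identity holds on the slightly larger range $0\leq l\leq k-j$.
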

  \begin{proof} 
    The proof uses induction on $j$. One knows that if $0 \leq l \leq k-1$ then $\lieprod{g}{\ad{f}{l}{g}}=0$, therefore $\lieprod{f}{\lieprod{g}{\ad{f}{l}{g}}}=0$. From the Jacobi Identity follows:
    \begin{equation*} 
      \lieprod{\ad{f}{1}{g}}{\ad{f}{l}{g}}=-\lieprod{g}{\ad{f}{l+1}{g}}
    \end{equation*}
    thus the proposition is true when $j=1$.
    
    Now, suppose that there is a $j_0<k$ such that the thesis is valid for all $j \leq j_0$, i.e.,
    \begin{equation*}
      \lieprod{\ad{f}{j}{g}}{\ad{f}{l}{g}}=-\lieprod{\ad{f}{j-1}{g}}{\ad{f}{l+1}{g}}
    \end{equation*}
    for all $x$, with $0\leq l \leq k-(j+1)$.
    
    Then, for all $j<j_0$, if $0\leq l \leq k-(j+1)$, one shows by induction in $m$ that:
    \begin{equation} \label{e:inducao em m}
      \lieprod{\ad{f}{m}{g}}{\ad{f}{l+j-m}{g}}=0
    \end{equation}
    for all $x$, with $0 \leq m \leq j$. Indeed, if $m=0$, we have $\lieprod{g}{\ad{f}{l+j}{g}}$, which is null by the proposition hypothesis since $l+j\leq k-1$. If there is a $m_0<j$ such that the relation is valid for all $m \leq m_0$ then, since we have the following inequalities:
    \begin{align*}
      m+1 &\leq j \\
      l+j-(m+1) & \leq k-(m+1),
    \end{align*}
    then, by the induction hypothesis on $m$, one has:
    \begin{equation*} 
      0=\lieprod{\ad{f}{m}{g}}{\ad{f}{l+j-m}{g}},
    \end{equation*}
    but, by the induction hypothesis on $j$, one gets:
    \begin{equation*} 
      \lieprod{\ad{f}{m+1}{g}}{\ad{f}{l+j-(m+1)}{g}}=-\lieprod{\ad{f}{m}{g}}{\ad{f}{l+j-m}{g}}
    \end{equation*}
    which is precisely the relation for $m+1$, ending the induction on $m$:

    So, taking $m=j$ in equation \eqref{e:inducao em m}, it follows that $\lieprod{\ad{f}{j}{g}}{\ad{f}{l}{g}}=0$, and this implies that $\lieprod{f}{\lieprod{\ad{f}{j}{g}}{\ad{f}{l}{g}}}=\lieprod{f}{0}=0$. Again, by the Jacobi Identity,
    \begin{equation*}
      \lieprod{\ad{f}{j+1}{g}}{\ad{f}{l}{g}}=-\lieprod{\ad{f}{(j+1)-1}{g}}{\ad{f}{l+1}{g}},
    \end{equation*}
    ending the induction on $j$.
  \end{proof}

  \begin{cor}
    With the same hypothesis of Proposition \ref{l:passa j}, if $0 \leq j \leq k$, then
    \begin{equation*}
      \lieprod{g}{\ad{f}{k}{g}}=(-1)^j\lieprod{\ad{f}{j}{g}}{\ad{f}{k-j}{g}}.
    \end{equation*}
  \end{cor}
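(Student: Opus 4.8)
The plan is to argue by induction on $j$, the base case $j=0$ being the tautology $\lieprod{g}{\ad{f}{k}{g}}=\lieprod{\ad{f}{0}{g}}{\ad{f}{k}{g}}$. To pass from $j$ to $j+1$ it suffices to establish the sign-flipping relation
\begin{equation*}
  \lieprod{\ad{f}{j}{g}}{\ad{f}{k-j}{g}}=-\lieprod{\ad{f}{j+1}{g}}{\ad{f}{k-j-1}{g}},
\end{equation*}
valid for $0\leq j\leq k-1$, since iterating it manufactures the factor $(-1)^{j}$ one power at a time. The catch is that this relation looks like the conclusion of Proposition \ref{l:passa j} with parameters $j+1$ and $l=k-j-1$, but those parameters violate the range $0\leq l\leq k-(j+2)$ demanded there: the proposition only controls iterated brackets of total $f$-degree at most $k-1$, whereas the brackets occurring here have total degree exactly $k$. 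So the relation has to be produced by one extra application of the Jacobi identity at the top degree.

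First I would record the auxiliary fact that every bracket $\lieprod{\ad{f}{a}{g}}{\ad{f}{b}{g}}$ with $a+b\leq k-1$ vanishes identically. This follows by iterating the sliding relation of Proposition \ref{l:passa j}: each application lowers the first index by one while keeping the total degree fixed at $a+b\leq k-1$, so the range hypothesis remains satisfied throughout the chain, which terminates at $\pm\lieprod{g}{\ad{f}{a+b}{g}}$; the latter is zero by the standing hypothesis $\lieprod{g}{\ad{f}{i}{g}}=0$ for $0\leq i\leq k-1$.

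With this in hand, the top-degree relation is obtained by applying the Jacobi identity to the triple $f$, $\ad{f}{j}{g}$, $\ad{f}{k-j-1}{g}$. Two of the three cyclic terms reduce, up to sign and via $\lieprod{f}{\ad{f}{j}{g}}=\ad{f}{j+1}{g}$ together with $\lieprod{f}{\ad{f}{k-j-1}{g}}=\ad{f}{k-j}{g}$, to $\lieprod{\ad{f}{j+1}{g}}{\ad{f}{k-j-1}{g}}$ and $\lieprod{\ad{f}{j}{g}}{\ad{f}{k-j}{g}}$, while the remaining term is $\lieprod{f}{\lieprod{\ad{f}{j}{g}}{\ad{f}{k-j-1}{g}}}$. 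The inner bracket $\lieprod{\ad{f}{j}{g}}{\ad{f}{k-j-1}{g}}$ has total degree $k-1$, hence vanishes by the auxiliary fact, so this term drops out and precisely the desired sign-flipping relation remains. The main obstacle is exactly this boundary book-keeping: Proposition \ref{l:passa j} stops one degree short of what the corollary asserts, and the crux is that the one missing top-degree step is nonetheless legitimate because the degree-$(k-1)$ bracket that would obstruct it is forced to be zero. Once the relation is secured for $0\leq j\leq k-1$, the induction carries the claim through to $j=k$.
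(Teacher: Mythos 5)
Your proof is correct, and it is in fact more careful than the paper's own. The paper proves the corollary by the same induction on $j$, but at the inductive step it simply cites Proposition \ref{l:passa j} with parameters $j+1$ and $l=k-(j+1)$; as you spotted, the proposition's conclusion only covers $0\leq l\leq k-((j+1)+1)=k-(j+2)$, so that citation is out of range by one. (The paper's displayed check ``$k-j-1=k-(j+1)\leq k-(j+1)$'' compares $l$ against the bound with the corollary's $j$ substituted where the proposition's $j+1$ belongs.) In other words, the paper uses the sliding relation at total $f$-degree exactly $k$, while the proposition as stated guarantees it only up to degree $k-1$ --- precisely the boundary issue your proposal identifies. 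Your two additions close this gap: first the auxiliary fact that $\lieprod{\ad{f}{a}{g}}{\ad{f}{b}{g}}=0$ whenever $a+b\leq k-1$, obtained by iterating the in-range sliding relation down to $\pm\lieprod{g}{\ad{f}{a+b}{g}}=0$ (this vanishing is also implicit in equation \eqref{e:inducao em m} inside the proposition's proof); and then one further application of the Jacobi identity,
\begin{equation*}
  \lieprod{f}{\lieprod{\ad{f}{j}{g}}{\ad{f}{k-j-1}{g}}} = \lieprod{\ad{f}{j+1}{g}}{\ad{f}{k-j-1}{g}} + \lieprod{\ad{f}{j}{g}}{\ad{f}{k-j}{g}},
\end{equation*}
whose left side vanishes because the inner bracket has total degree $k-1$. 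This is exactly the mechanism of the final step of the proposition's own proof, promoted one degree higher; what it buys is a rigorous top-degree sign flip, i.e., it shows the proposition actually holds with the extended range $0\leq l\leq k-j$, which is what the corollary really needs. Your remaining bookkeeping (trivial base case $j=0$, applying the flip for $0\leq j\leq k-1$ to reach $j=k$) is sound, so the corollary stands, now with a complete justification that the paper's version lacks.
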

  \begin{proof}
    Clearly, the assertion is true for $j=0$. Suppose that it's also true for all $j<k$. One has already these inequalities,
    \begin{gather*}
      1 \leq j+1 \leq k \\
      k-j-1 = k-(j+1) \leq k-(j+1)
    \end{gather*}
    and use the Proposition \ref{l:passa j}. Therefore
    \begin{equation*} 
      \lieprod{g}{\ad{f}{k}{g}}=(-1)^j\lieprod{\ad{f}{j}{g}}{\ad{f}{k-j}{g}}=(-1)^{j+1}\lieprod{\ad{f}{j+1}{g}}{\ad{f}{k-(j+1)}{g}}.
    \end{equation*}
  \end{proof}

  \begin{cor} \label{c:k_par_[g,ad_f^kg]=0}
    With the same hypothesis of Proposition \ref{l:passa j}, if $k$ is even then $\lieprod{g}{\ad{f}{k}{g}}=0$.
  \end{cor}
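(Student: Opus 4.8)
The plan is to invoke the preceding corollary with the single, well-chosen value $j=\frac{k}{2}$. Since $k$ is even, $\frac{k}{2}$ is a nonnegative integer, and it plainly satisfies $0\leq \frac{k}{2}\leq k$, so it lies in the admissible range for $j$ in that corollary. First I would substitute this value into the identity
\begin{equation*}
  \lieprod{g}{\ad{f}{k}{g}}=(-1)^j\lieprod{\ad{f}{j}{g}}{\ad{f}{k-j}{g}},
\end{equation*}
obtaining $\lieprod{g}{\ad{f}{k}{g}}=(-1)^{k/2}\lieprod{\ad{f}{k/2}{g}}{\ad{f}{k/2}{g}}$, where both entries of the bracket are the same vector field $\ad{f}{k/2}{g}$.

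The second and final step is to note that the Lie bracket is antisymmetric, so the bracket of any vector field with itself is identically zero; in particular $\lieprod{\ad{f}{k/2}{g}}{\ad{f}{k/2}{g}}=0$ for all $x$. Hence $\lieprod{g}{\ad{f}{k}{g}}=0$, as claimed. There is no real obstacle here: the only thing to verify is that the chosen index $j=\frac{k}{2}$ is an integer lying in $\{0,\dots,k\}$, which is exactly where the parity hypothesis on $k$ is used. By contrast, an odd $k$ would force $\frac{k}{2}$ to be a half-integer and the argument would break down, consistent with the fact, stressed in the introduction, that $\lieprod{g}{\ad{f}{k}{g}}$ need not vanish when $k$ is odd.
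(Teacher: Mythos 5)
Your proof is correct and is essentially the paper's own argument: the paper likewise applies the preceding corollary with $j=\tfrac{k}{2}$ and concludes from $\lieprod{g}{\ad{f}{k}{g}}=(-1)^{k/2}\lieprod{\ad{f}{k/2}{g}}{\ad{f}{k/2}{g}}$ that the bracket vanishes. You merely make explicit the antisymmetry step (a bracket of a vector field with itself is zero) that the paper leaves implicit.
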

  \begin{proof}
    It's straightforward since $\lieprod{g}{\ad{f}{k}{g}}=(-1)^{\frac{k}{2}}\lieprod{\ad{f}{\frac{k}{2}}{g}}{\ad{f}{\frac{k}{2}}{g}}$.
  \end{proof}

  Now it is very simple to prove the Theorem \ref{p:q inteiro}.

  \begin{proof}[Proof of Theorem \ref{p:q inteiro}]
    Suppose that all the derivatives up to $k$ of $\innerprod{p}{g}$ does not depend explicitly of $u$, i.e., the control appears with a identically zero coefficient. So, from Lemma \ref{l:derivada_de_<p,h>}, it is known that $\innerprod{p}{\lieprod{g}{\ad{f}{i}{g}}}=0$, for all $i=0,\ldots,k-1$. 
    
    Since $p$ is arbitrary, one concludes also that $\lieprod{g}{\ad{f}{i}{g}}=0$, for all $i=0,\ldots,k-1$.
    
    Note that $k$ is not necessarily the problem order, but $k$ is less than or equal it.
    
    If $k$ is even, one can  use the Corollary \ref{c:k_par_[g,ad_f^kg]=0} to conclude that $\lieprod{g}{\ad{f}{k}{g}}=0$, and from that $\innerprod{p}{\lieprod{g}{\ad{f}{k}{g}}}=0$.
    
    Using again the Lemma \ref{l:derivada_de_<p,h>}, it is easy to see that $\innerprod{p}{\lieprod{g}{\ad{f}{k}{g}}}$ is precisely the coefficient of $u$ in the derivative of order $k+1$ of $\innerprod{p}{g}$. Therefore the first derivative of $\innerprod{p}{g}$, that explicitly depends of $u$ can not be odd. Therefore the problem order is always even.
  \end{proof}
  
  The problem order, unlike the local order, can be easily evaluated. Thereby, it allows one to develop tools that can be directly applied to solve practical problems, like the design of optimal controls on a singular interval, and to answer theoretical questions, like the qualitative behavior of optimal controls at junctions of singular and nonsingular intervals. This paper has given a proof that the problem order is certainly a positive integer only if the system has a single input.

\bibliography{problem_order}
\bibliographystyle{plain}
\nocite{POWERS:1980}
\nocite{ODA:2008}
\nocite{CHYBA-LEONARD-SONTAG:2003}
\nocite{PONTRYAGIN:1962}
\nocite{ISIDORI:1989}
\nocite{BELL:1993}
\nocite{ODIA-BELL:2012}
\nocite{MEESOMBOON-BELL:2002}

\end{document}